\newtheorem{theorem}{Theorem}
\newtheorem{corollary}[theorem]{Corollary}
\newtheorem{lemma}[theorem]{Lemma}
\newtheorem{proposition}[theorem]{Proposition}
\theoremstyle{remark}
\newtheorem{definition}{Definition}
\newcommand{\Q}{\mathbb{Q}}
\newcommand{\N}{\mathbb{N}}
\newcommand{\Z}{\mathbb{Z}}
\newcommand*\colvec[1]{\begin{pmatrix}#1\end{pmatrix}}
\def\modd#1#2{#1\ \mbox{\rm (mod}\ #2\mbox{\rm )}}
\title{Critical exponents of infinite balanced words}
\author{
Narad Rampersad\\
University of Winnipeg (Math/Stats),\\
Winnipeg, MB, R3B 2E9, CANADA\\
\url{n.rampersad@uwinnipeg.ca}\bigskip\\
Jeffrey Shallit\\
School of Computer Science\\
University of Waterloo\\
Waterloo, ON, N2L 3G1, CANADA\\
\url{shallit@uwaterloo.ca}\bigskip\\
\'Elise Vandomme\\
Laboratoire de Combinatoire et d'informatique Math\'ematique (LaCIM)\\
Universit\'e du Qu\'ebec \`a Montr\'eal\\
Montr\'eal, QC, H3C 3P8, CANADA\\
\url{elise.vandomme@lacim.ca}}
\begin{document}
\maketitle

\begin{abstract}
Over an alphabet of size $3$ we construct an infinite balanced word with
critical exponent $2+\sqrt{2}/2$.  Over an alphabet of size $4$
we construct an infinite balanced word with critical exponent $(5+\sqrt{5})/4$.
Over larger alphabets, we give some candidates for balanced words (found
computationally) having small critical exponents.  We also explore a method for
proving these results using the automated theorem prover Walnut.
\end{abstract}

\section{Introduction}
A word $w$ (finite or infinite) is \emph{balanced} if, for any two
factors $u$ and $v$ of $w$ of the same length, the number of
occurrences of each alphabet symbol in $u$ and $v$ differ by at most
$1$.  Vuillon \cite{Vui03} gives a survey of some of the work done previously on
balanced words.  It is well-known that over a binary alphabet the class of
infinite aperiodic balanced words is exactly the class of Sturmian
words.  Sturmian words have been studied extensively (see the survey
by Berstel and S\'e\'ebold \cite{BS02}), and in
particular, much is known about the repetitions that occur in Sturmian
words.  The \emph{critical exponent} of an infinite word $w$ is the
supremum of the set of exponents of fractional powers appearing in
$w$.  Damanik and Lenz \cite{DL02} and Justin and Pirillo \cite{Jus01}
gave an exact formula for the critical exponent of a Sturmian word.
The Fibonacci word has critical exponent $(5+\sqrt{5})/2$ \cite{MP92},
and furthermore, by the formula previously mentioned
\cite{DL02, Jus01}, this is minimal over all Sturmian words.
In other words, over a binary alphabet, the least
critical exponent among all infinite balanced words is
$(5+\sqrt{5})/2$.  However, little is known about the critical exponents of
infinite balanced words over larger alphabets.  In
this paper we aim to construct infinite balanced words  over a given
alphabet having the smallest critical exponent  possible. Over an alphabet of
size $3$ we construct a balanced word with critical exponent
$2+\sqrt{2}/2$.  Over an alphabet of size $4$ we construct
a balanced word with critical exponent $(5+\sqrt{5})/4$.
Over larger alphabets, we give some candidates for balanced words (found
computationally) having small critical exponents.  We also explore a method for
proving these results using the automated theorem prover
Walnut \cite{Mou16}.

\section{Preliminaries}
We let $|w|$ denote the length of a finite word $w$, and if $a$ is a
letter of the alphabet, we let $|w|_a$ denote the number of occurrences of $a$ in
$w$.

\begin{definition}
A word $w$ (finite or infinite) over an alphabet $A$ is
\emph{balanced} if for every $a \in A$ and every pair $u, v$ of
factors of $w$ with $|u|=|v|$ we have
\[
|\,|u|_a - |v|_a\,| \leq 1.
\]
\end{definition}

Let $u$ be a finite word and write $u = u_0u_1 \cdots u_{n-1}$, where
the $u_i$ are letters.  A positive integer $p$ is a \emph{period}
of $u$ if $u_i = u_{i+p}$ for all $i$.  Let $e = |u|/p$ and let $z$ be
the prefix of $u$ of length $p$.  We say that $u$ has
\emph{exponent} $e$ and write $u = z^e$.  The word $z$ is called a
\emph{fractional root} of $u$.  Note that a word may have
multiple periods, and consequently, multiple exponents and fractional
roots.  The word $u$ is \emph{primitive} if the only integer exponent
of $u$ is $1$.  Let $w$ be a finite or infinite word.  The largest
$r\in \N$ (if it exists) such that $u^r$ is a factor of $w$ is the (integral)
\emph{index} of $u$ in $w$.

\begin{definition}
 The \emph{critical exponent} of an infinite word $w$ is 
\begin{align*}
E(w) &= \sup\{r\in\Q : \text{there is a finite, non-empty factor of $w$
       with exponent $r$}\}\\
 &=\inf\{r\in\Q : \text{there is no finite, non-empty factor of $w$
       with exponent $r$}\}.
\end{align*}
\end{definition}

The infinite words studied in this paper are constructed by modifying
Sturmian words.  The structure of such words are determined by a
parameter $\alpha$, which is an irrational real number between $0$ and
$1$, called the \emph{slope}, and more specifically, by the continued
fraction expansion $\alpha = [d_0, d_1, d_2, d_3, \ldots]$, where $d_i
\in \Z$ for $i \geq 0$.

\begin{definition}
The \emph{characteristic Sturmian word with
slope $\alpha$} (see \cite[Chapter~9]{AS03})
is the infinite word $c_\alpha$ obtained as the limit
of the sequence of \emph{standard words} $s_n$ defined by
\[
s_{0} = 0,\quad s_1 = 0^{d_1-1}1,\quad s_n = s_{n-1}^{d_n}s_{n-2},\quad n \geq 2.
\]
We write $c_\alpha[i]$ to denote the $i$-th letter of $c_\alpha$,
where we index starting from $1$.  For $n \geq 2$, we also define the
\emph{semi-standard words}
\[
s_{n,t} = s_{n-1}^ts_{n-2},
\]
where $t \in \Z$ and $1 \leq t < d_n$.
\end{definition}

We also make use of the \emph{convergents} of $\alpha$, namely
\[
\frac{p_n}{q_n} = [d_0, d_1, d_2, d_3, \ldots, d_n],
\]
where 
\begin{align*}
p_{-2} = 0,\quad p_{-1} = 1,\quad p_n = d_np_{n-1} + p_{n-2} \text{
  for } n \geq 0;\\
q_{-2} = 1,\quad q_{-1} = 0,\quad q_n = d_nq_{n-1} + q_{n-2} \text{
  for } n \geq 0.
\end{align*}
The convergents have the following approximation property:
\begin{equation}\label{eq:approx}
\left| \alpha - \frac{p_n}{q_n} \right| < \frac{1}{q_n q_{n+1}} < \frac{1}{q_n^2}.
\end{equation}

The following fact is classical:
\begin{equation}\label{eq:ratio}
\frac{q_{n+1}}{q_n} = [d_{n+1}, d_n, \ldots, d_1].
\end{equation}
Another classical result is that the continued fraction expansion for
$\alpha$ is ultimately periodic if and only if $\alpha$ is a quadratic
irrational.  We will need the reverse direction of the following
result as well.

\begin{theorem}[{\cite{LS93}}]\label{thm:lin_rec}
The sequences $(q_n)_{n \geq 0}$ and $(p_n)_{n \geq 0}$ satisfy a
linear recurrence with constant coefficients if and only if $\alpha$ is a
quadratic irrational.  For both sequences the linear recurrence is the
same and has the form
\begin{equation}\label{eq:lin_rec}
  q_{n+2s} - tq_{n+s} + (-1)^sq_n = 0,\quad n \geq r,
\end{equation}
for some $r,s,t$.
\end{theorem}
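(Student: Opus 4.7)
The plan is to prove both directions separately, with the forward implication (quadraticity giving the recurrence) being the more explicit and usable one. For the forward direction, let $\alpha$ be a quadratic irrational, so by the classical theorem mentioned above its continued fraction expansion is ultimately periodic: there exist integers $r$ and $s$ with $d_{n+s}=d_n$ for all $n\ge r$. I would encode the CF recursion in matrix form, writing
\[
\begin{pmatrix} q_{n+1}\\ q_n\end{pmatrix}=M_{n+1}\begin{pmatrix} q_n\\ q_{n-1}\end{pmatrix},\qquad
M_k=\begin{pmatrix} d_k&1\\1&0\end{pmatrix},
\]
so that each $M_k$ has determinant $-1$. Set $P_n:=M_{n+s}M_{n+s-1}\cdots M_{n+1}$, which advances the pair $(q_{n-1},q_n)$ by $s$ steps. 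The ultimate periodicity of $(d_k)$ forces $P_{n+s}=P_n$ for $n\ge r$ (the matrix factors in $P_{n+s}$ are identical to those of $P_n$, in the same order), while $P_{n+1}=M_{n+1}P_nM_{n+1}^{-1}$ shows that all these $P_n$ are conjugate. Hence they all share the same trace $t$ and determinant $\det P_n=(-1)^s$.

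Next I would apply the Cayley--Hamilton theorem to $P_n$: every $P_n$ satisfies $P_n^{2}-tP_n+(-1)^{s}I=0$. Applied to the column vector $(q_n,q_{n-1})^{T}$, the first step $P_n$ gives $(q_{n+s},q_{n+s-1})^T$, and a second application, now using $P_{n+s}=P_n$, yields $(q_{n+2s},q_{n+2s-1})^T$. Reading off the first coordinate produces exactly the recurrence
\[
q_{n+2s}-t\,q_{n+s}+(-1)^{s}q_n=0,\qquad n\ge r.
\]
Exactly the same computation with the matrix $A_n=\bigl(\begin{smallmatrix} p_n&p_{n-1}\\ q_n&q_{n-1}\end{smallmatrix}\bigr)$ in place of the column vector delivers the identical recurrence for $(p_n)$, so both sequences satisfy the common recurrence \eqref{eq:lin_rec}.

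For the converse, I would assume both $(q_n)$ and $(p_n)$ satisfy a linear recurrence with constant coefficients. Then $\epsilon_n:=\alpha q_n-p_n$ satisfies the same recurrence by linearity, and the classical identity $p_nq_{n-1}-p_{n-1}q_n=(-1)^{n+1}$ gives $\epsilon_n q_{n-1}-\epsilon_{n-1}q_n=(-1)^n$. Since $\epsilon_n\to 0$ while $q_n\to\infty$, the roots of the characteristic polynomial split between those controlling the growth of $(q_n)$ and those controlling the decay of $(\epsilon_n)$; the identity above pairs each dominant root $\lambda$ of $(q_n)$ with a root of $(\epsilon_n)$ of reciprocal modulus. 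The partial quotients are recovered as $d_n=(q_n-q_{n-2})/q_{n-1}$, which is a ratio of two $C$-finite sequences constrained to take positive-integer values in a bounded range; a standard argument on $C$-finite sequences then forces $(d_n)$ to be eventually periodic, so $\alpha$ is a quadratic irrational and the forward direction supplies the specific form.

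I expect the main obstacle to be the converse. The forward direction is essentially bookkeeping with $2\times 2$ matrices and Cayley--Hamilton, whereas deducing periodicity of the partial quotients from the mere existence of a linear recurrence requires care: one must rule out scenarios where the characteristic polynomial has several roots of the same dominant modulus (which actually do occur in \eqref{eq:lin_rec} for $s\ge 2$), so that the naive ``$q_{n+1}/q_n\to\lambda$'' argument fails. The resolution is to work period-by-period, showing that within each residue class $n\bmod s$ the ratio $q_{n+s}/q_n$ stabilises, and to combine this with the integrality of $d_n$; this is the delicate part and is where we follow \cite{LS93}.
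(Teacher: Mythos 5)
The paper does not prove this theorem: it is quoted directly from Lenstra and Shallit \cite{LS93}, so there is no internal proof to compare against, and your attempt must stand on its own. Your forward direction does: encoding $q_{n+1}=d_{n+1}q_n+q_{n-1}$ via the matrices $M_k=\left(\begin{smallmatrix} d_k&1\\ 1&0\end{smallmatrix}\right)$, noting that the period-$s$ products $P_n$ are eventually constant in $n$ and mutually conjugate (hence share a trace $t$ and determinant $(-1)^s$), and applying Cayley--Hamilton gives exactly \eqref{eq:lin_rec}, with the same recurrence for $(p_n)$ since both sequences satisfy the same two-term recursion. This is correct, complete up to routine index bookkeeping, and is the only direction the surrounding text actually uses (quadratic irrational $\Rightarrow$ recurrence of the stated form, invoked in the automaticity proof of Section~5).

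The converse, however, has a genuine gap. You claim that $d_n=(q_n-q_{n-2})/q_{n-1}$ is ``a ratio of two $C$-finite sequences constrained to take positive-integer values in a bounded range,'' and that ``a standard argument'' then forces $(d_n)$ to be eventually periodic. First, boundedness of $d_n$ is not established: $C$-finiteness gives an upper bound $q_n=O(n^k\rho^n)$ but no matching exponential lower bound on $q_{n-1}$, so $q_n/q_{n-1}$ is not obviously bounded. Second, and more seriously, no standard argument does what you ask: bounded partial quotients occur for uncountably many non-quadratic $\alpha$, so boundedness plus integrality cannot suffice, and upgrading ``integer quotient of $C$-finite sequences'' to ``$C$-finite'' is the Hadamard quotient theorem, which is anything but routine. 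This step is precisely the main content of \cite{LS93}, and your closing remark that it ``is where we follow \cite{LS93}'' concedes that the converse is being cited rather than proved. That is consistent with how the paper treats the theorem, but the converse should then be presented as a citation, not as a sketch whose crux is labelled standard.
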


It is easy to see that $|s_n|=q_n$ for $n\geq 1$.  We need a more
precise result:

\begin{lemma}[{\cite[Lemma~9.1.9]{AS03}}]\label{lem:9.1.9}
For $n \geq 0$ we have $|s_n|_0=q_n-p_n$ and $|s_n|_1=p_n$.
\end{lemma}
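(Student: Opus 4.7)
The statement is tailor-made for a straightforward induction on $n$, tracking the recurrences for both $s_n$ and the convergents $p_n, q_n$ in parallel.

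The plan is to verify the two base cases $n=0$ and $n=1$ directly from the definitions, then to use the recurrence $s_n = s_{n-1}^{d_n} s_{n-2}$ together with $p_n = d_n p_{n-1} + p_{n-2}$ and $q_n = d_n q_{n-1} + q_{n-2}$ to perform the inductive step. For the base case $n=0$, since $\alpha \in (0,1)$ we have $d_0 = 0$, so $p_0 = d_0 p_{-1} + p_{-2} = 0$ and $q_0 = d_0 q_{-1} + q_{-2} = 1$; comparing with $s_0 = 0$ gives $|s_0|_0 = 1 = q_0 - p_0$ and $|s_0|_1 = 0 = p_0$. For $n=1$, we have $s_1 = 0^{d_1 - 1}1$, while $p_1 = d_1 p_0 + p_{-1} = 1$ and $q_1 = d_1 q_0 + q_{-1} = d_1$, so again $|s_1|_1 = 1 = p_1$ and $|s_1|_0 = d_1 - 1 = q_1 - p_1$.

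For the inductive step at $n \geq 2$, assume the lemma holds for $n-1$ and $n-2$. Since $s_n$ is the concatenation $s_{n-1}^{d_n} s_{n-2}$, the number of occurrences of each letter is additive across the concatenation, so
\[
|s_n|_1 = d_n |s_{n-1}|_1 + |s_{n-2}|_1 = d_n p_{n-1} + p_{n-2} = p_n,
\]
and likewise
\[
|s_n|_0 = d_n |s_{n-1}|_0 + |s_{n-2}|_0 = d_n(q_{n-1} - p_{n-1}) + (q_{n-2} - p_{n-2}) = q_n - p_n,
\]
where the last equality uses the linearity of the convergent recurrences. This closes the induction.

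There is no genuine obstacle here: the lemma is essentially a bookkeeping exercise matching the combinatorial recursion defining $s_n$ against the arithmetic recursion defining $p_n$ and $q_n$. The only subtlety worth flagging is ensuring that the initial conditions $p_{-2}=0,\ p_{-1}=1,\ q_{-2}=1,\ q_{-1}=0$ and the convention $d_0 = 0$ (because $0 < \alpha < 1$) are used consistently to make the $n=0$ case work out.
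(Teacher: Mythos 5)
Your proof is correct. The paper itself does not prove this lemma---it simply cites \cite[Lemma~9.1.9]{AS03}---and your two-step induction, matching the concatenation recurrence $s_n = s_{n-1}^{d_n}s_{n-2}$ against the convergent recurrences $p_n = d_np_{n-1}+p_{n-2}$ and $q_n = d_nq_{n-1}+q_{n-2}$, is exactly the standard argument one finds in that reference; the base cases and the use of $d_0=0$ are handled correctly.
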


Next we introduce the \emph{Ostrowski $\alpha$-numeration system}
\cite{Ost22} (see also \cite[Section~3.9]{AS03}).  Each
non-negative integer $N$ can be represented uniquely as
$b_jb_{j-1}\cdots b_0$, where
\[
N = \sum_{0 \leq i \leq j} b_iq_i,
\]
where the $b_i$ are integers satisfying:
\begin{enumerate}
\item $0 \leq b_0 < d_1$,
\item $0 \leq b_i \leq d_{i+1}$, for $i \geq 1$, and
\item for $i \geq 1$, if $b_i = d_{i+1}$, then $b_{i-1}=0$.
\end{enumerate}

The next two results give the connection between characteristic
Sturmian words and the Ostrowski numeration system.

\begin{theorem}[{\cite[Theorem~9.1.13]{AS03}}]\label{thm:9.1.13}
Let $N\geq 1$ be an integer with Ostrowski $\alpha$-representation
$b_jb_{j-1}\cdots b_0$.  Then the length-$N$ prefix of $c_\alpha$ is
equal to $s_j^{b_j}s_{j-1}^{b_{j-1}} \cdots s_0^{b_0}$.
\end{theorem}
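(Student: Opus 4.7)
The plan is to prove, by strong induction on $j$, the slightly stronger statement that whenever $b_jb_{j-1}\cdots b_0$ is a valid Ostrowski representation with $b_j\geq 1$, the concatenation $W = s_j^{b_j}s_{j-1}^{b_{j-1}}\cdots s_0^{b_0}$ is a prefix of the standard word $s_{j+1}$. Since each $s_n$ is a prefix of $c_\alpha$ (by the limit definition and the recurrence $s_{n+1} = s_n^{d_{n+1}}s_{n-1}$) and $|W| = \sum_i b_i q_i = N$, this yields the theorem at once. For the base case $j = 0$ we have $N = b_0$ with $0 \leq b_0 \leq d_1 - 1$, so $W = s_0^{b_0} = 0^{b_0}$ is plainly a prefix of $s_1 = 0^{d_1-1}1$.

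For the inductive step the key tool is the recurrence $s_{j+1} = s_j^{d_{j+1}} s_{j-1}$, applied according to whether $b_j < d_{j+1}$ or $b_j = d_{j+1}$. In the first case, write $W = s_j^{b_j} W'$ with $W' = s_{j-1}^{b_{j-1}}\cdots s_0^{b_0}$; by the inductive hypothesis $W'$ is a prefix of $s_j$, and since $s_{j+1} = s_j^{b_j}\bigl(s_j^{d_{j+1}-b_j} s_{j-1}\bigr)$ with $d_{j+1} - b_j \geq 1$, the second factor begins with $s_j$, so $W$ is a prefix of $s_{j+1}$. In the second case, condition 3 of the Ostrowski representation forces $b_{j-1} = 0$, so $W = s_j^{d_{j+1}} W''$ with $W'' = s_{j-2}^{b_{j-2}}\cdots s_0^{b_0}$; by the inductive hypothesis $W''$ is a prefix of $s_{j-1}$, and hence $W$ is a prefix of $s_j^{d_{j+1}} s_{j-1} = s_{j+1}$.

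The main obstacle is justifying the length bounds needed to invoke the inductive hypothesis: one must verify that the tail $W'$ or $W''$ has length strictly less than $|s_j| = q_j$ or $|s_{j-1}| = q_{j-1}$, respectively, so that ``prefix of $s_j$'' (or of $s_{j-1}$) is a meaningful statement. This reduces to a standard auxiliary lemma about the Ostrowski system asserting that any non-negative integer with Ostrowski representation of leading index at most $k$ lies in $[0, q_{k+1} - 1]$, proved by a short induction on $k$; condition 3 is essential here, because without it one could overshoot via the identity $d_{j+1} q_j + q_{j-1} = q_{j+1}$. With this length estimate in hand, the two cases above close the induction and establish the theorem.
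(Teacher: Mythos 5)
The paper offers no proof of this statement: it is quoted verbatim as Theorem~9.1.13 of Allouche and Shallit \cite{AS03}, so there is nothing internal to compare your argument against. On its own merits your induction is correct and is essentially the textbook proof: the case split on $b_j < d_{j+1}$ versus $b_j = d_{j+1}$ (where condition~3 kills $b_{j-1}$) against the recurrence $s_{j+1} = s_j^{d_{j+1}} s_{j-1}$ is exactly the right mechanism, and your auxiliary bound $\sum_{i \le k} b_i q_i \le q_{k+1} - 1$ is the standard companion lemma. Two small presentational points: when the tail $b_{j-1}\cdots b_0$ has leading zeros you should strip them to the largest index $i$ with $b_i \ge 1$, apply the hypothesis to get a prefix of $s_{i+1}$, and then use that $s_m$ is a prefix of $s_{m+1}$ for $m \ge 1$ to climb up to $s_j$ (this chain never needs $s_0$ to be a prefix of $s_1$, which conveniently sidesteps the degenerate case $d_1 = 1$, where in any event $b_0 = 0$ is forced); and your closing claim that ``each $s_n$ is a prefix of $c_\alpha$'' should be restricted to $n \ge 1$ for the same reason.
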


\begin{theorem}[{\cite[Theorem~9.1.15]{AS03}}]\label{thm:9.1.15}
Let $N\geq 1$ be an integer with Ostrowski $\alpha$-representation
$b_jb_{j-1}\cdots b_0$.  Then $c_\alpha[N] = 1$ if and only if
$b_jb_{j-1}\cdots b_0$ ends with an odd number of $0$'s.
\end{theorem}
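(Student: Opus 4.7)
The plan is to combine Theorem~\ref{thm:9.1.13}, which describes the length-$N$ prefix of $c_\alpha$ explicitly, with a simple parity observation about the last letter of the standard words $s_n$. The point is that $c_\alpha[N]$ is the very last letter of its length-$N$ prefix, and Theorem~\ref{thm:9.1.13} tells us this prefix factors as $s_j^{b_j}s_{j-1}^{b_{j-1}} \cdots s_0^{b_0}$, so we just need to identify which $s_i$ contributes the last letter and what that letter is.

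First, I would prove by induction on $n$ that $s_n$ ends with $1$ when $n$ is odd and with $0$ when $n$ is even. The base cases follow from $s_0 = 0$ (ending in $0$) and $s_1 = 0^{d_1-1}1$ (ending in $1$). The inductive step is immediate from the recursion $s_n = s_{n-1}^{d_n} s_{n-2}$, which forces $s_n$ to end in the same letter as $s_{n-2}$; hence the parity pattern propagates.

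Next, let $k$ be the number of trailing zeros in $b_jb_{j-1}\cdots b_0$, i.e.\ $b_0 = b_1 = \cdots = b_{k-1} = 0$ and $b_k \neq 0$ (taking $k = 0$ if $b_0 \neq 0$). Using the factorization from Theorem~\ref{thm:9.1.13}, every block $s_i^{b_i}$ with $i < k$ is empty, so the last letter of the length-$N$ prefix is the last letter of $s_k^{b_k}$, which equals the last letter of $s_k$. By the parity statement just established, this last letter is $1$ iff $k$ is odd, which is exactly the conclusion.

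The only subtlety worth watching is whether one should read the Ostrowski digits left-to-right or right-to-left and whether $b_0$ can exceed $d_1-1$; however, condition~(1) of the Ostrowski system bounds $b_0$ by $d_1 - 1 < d_1$, and the convention fixed in the excerpt writes trailing digits on the right, so ``ends with an odd number of $0$'s'' unambiguously refers to the $b_i$ with small index. There is no real obstacle here: the argument is essentially bookkeeping once the parity lemma for $s_n$ is in hand.
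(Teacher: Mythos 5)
Your argument is correct: the parity of the last letter of $s_n$ follows by induction from $s_0=0$, $s_1=0^{d_1-1}1$, and the fact that $s_n=s_{n-1}^{d_n}s_{n-2}$ inherits its final letter from $s_{n-2}$ (all $s_n$ being nonempty), and since $N\geq 1$ guarantees a least index $k$ with $b_k\neq 0$, the factorization of Theorem~\ref{thm:9.1.13} shows $c_\alpha[N]$ is the last letter of $s_k$. Note that the paper itself offers no proof of this statement --- it is quoted from \cite[Theorem~9.1.15]{AS03} --- but your derivation from Theorem~\ref{thm:9.1.13} is the standard one and is complete.
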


One characteristic Sturmian word is of particular significance.  Let $\phi =
(1+\sqrt{5})/2$.  The \emph{Fibonacci word} is the characteristic Sturmian
word
\[
c_\theta = 010010100100101001010010010100\cdots
\]
with slope $\theta := 1/\phi^2 = [0,2,\overline{1}\,]$.
We call the corresponding standard words the \emph{finite Fibonacci
  words}:
\[
f_0 = 0,\quad f_1 = 01,\quad f_2 = 010,\quad \ldots
\]
It is easy to verify that for each $i\geq 2$ the finite
Fibonacci word $f_i$ has length $F_{i+2}$ (the $(i+2)$-th Fibonacci
number) and has $F_{i+1}$ $0$'s and $F_{i}$ $1$'s.  It is also
well-known that the infinite Fibonacci word is fixed by the morphism
that maps $0 \to 01$ and $1 \to 0$.  Mignosi and Pirillo \cite{MP92}
showed that $E(c_\theta) = 2+\phi$.  The more general results of
Damanik and Lenz \cite{DL02} and Justin and Pirillo \cite{Jus01} show
that this is minimal over all Sturmian words.

\section{Constructing balanced words}

We need a characterization of recurrent aperiodic balanced words
due to Hubert \cite{Hub00} (Graham \cite{Gra73} gave an equivalent
characterization), which is based on the following notion:
a word $y$ has the \emph{constant gap} property if, for each letter $a$,
there is some number $d$ such that the distance between successive
occurrences of $a$ in $y$ is always $d$.

\begin{theorem}[\cite{Hub00}]\label{thm:hubert}
 A recurrent aperiodic word $x$ is balanced if and only if
$x$ is obtained from a Sturmian word $u$ over $\{0,1\}$ by the following
procedure: replace the $0$'s in $u$ by a periodic sequence $y$ with
constant gaps over some alphabet $A$ and replace the
$1$'s in $u$ by a periodic sequence $y'$ with constant gaps over
some alphabet $B$, disjoint from $A$.
\end{theorem}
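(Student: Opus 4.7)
The plan is to prove the two directions separately. Sufficiency (the described construction yields a balanced word) is a direct counting argument; necessity (every recurrent aperiodic balanced word has this form) is the substantive content.

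For sufficiency, suppose $x = \sigma(u)$ where $u$ is Sturmian over $\{0,1\}$ and $\sigma$ replaces the successive $0$'s of $u$ by the successive symbols of the constant-gap periodic sequence $y$ over $A$, and the successive $1$'s by the successive symbols of $y'$ over $B$. Fix $a \in A$ with constant gap $d_a$ in $y$, and let $f_1, f_2$ be two factors of $x$ of the same length $n$, corresponding to factors $g_1, g_2$ of $u$. Then $|f_i|_a$ equals the number of occurrences of $a$ in a window of length $k_i := |g_i|_0$ in the bi-infinite periodic extension of $y$. The constant-gap property forces this count to lie in $\{\lfloor k_i/d_a\rfloor,\, \lceil k_i/d_a\rceil\}$, while Sturmian balance gives $|k_1 - k_2| \leq 1$. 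A short case check then yields $\bigl||f_1|_a - |f_2|_a\bigr| \leq 1$; the symmetric argument handles $b \in B$.

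For necessity, let $x$ be recurrent, aperiodic, and balanced over an alphabet $\Sigma$. The cases $|\Sigma| \leq 2$ are immediate, the case $|\Sigma|=2$ giving a Sturmian word by the classical result cited in the introduction. For $|\Sigma| \geq 3$, I would first show that the letter frequencies $\mu_a = \lim_n |x_0\cdots x_{n-1}|_a / n$ exist (a consequence of balance) and satisfy the following arithmetic constraint: there is an irrational $\alpha \in (0,1)$ such that each $\mu_a$ is a rational multiple of either $\alpha$ or $1-\alpha$. This dichotomy partitions $\Sigma = A \sqcup B$ (well-definedness follows because the two classes intersect only at $0$ when $\alpha$ is irrational). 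Defining $\pi : \Sigma \to \{0,1\}$ sending $A$ to $0$ and $B$ to $1$, one verifies that $\pi(x)$ inherits balance and aperiodicity, hence is Sturmian of slope $\alpha$, and that the subsequence of $x$ obtained by retaining only $A$-letters (resp.\ $B$-letters) is purely periodic with constant gaps. The main obstacle is establishing the frequency dichotomy: if three letters had frequencies that were linearly independent over $\Q$ modulo $\Q\alpha$, one should derive a contradiction by using equidistribution of the associated $3$-torus orbit to exhibit two windows of the same length whose counts of some letter differ by at least $2$, violating balance. Once the partition is in hand, the constant-gap property of $A$ follows because within the $A$-subsequence, balance combined with commensurable rational frequencies forces pure periodicity with the gaps dictated by those frequencies.
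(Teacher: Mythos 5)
The paper offers no proof of this statement: it is imported verbatim from Hubert \cite{Hub00} (with Graham \cite{Gra73} credited for an equivalent formulation), so there is no in-paper argument to compare yours against and I can only assess the sketch on its own terms. Your sufficiency direction is essentially complete and correct: the number of occurrences of a gap-$d_a$ letter in any window of $k$ consecutive symbols of $y$ lies in $\{\lfloor k/d_a\rfloor, \lceil k/d_a\rceil\}$, Sturmian balance gives $|k_1-k_2|\leq 1$, and the floor/ceiling case check closes the argument.

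The necessity direction, however, has genuine gaps, and they sit exactly where the content of Hubert's theorem lives. First, the frequency dichotomy is asserted rather than proved: you invoke an irrational $\alpha$ before saying where it comes from, and the proposed contradiction via equidistribution of a $3$-torus orbit does not obviously work --- letter frequencies alone do not determine occurrence counts in finite windows, so equidistribution of $(n\mu_{a_1},n\mu_{a_2},n\mu_{a_3})$ by itself does not exhibit two equal-length windows whose counts of some letter differ by $2$. What is actually needed is that balance forces each letter's occurrence set to be a Beatty-type sequence, followed by a Graham-style analysis of exact covers of $\mathbb{N}$ by such sequences \cite{Gra73}; this is the heart of the theorem. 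Second, the claim that $\pi(x)$ inherits balance is not automatic: balance of $x$ gives $|\,|u|_a-|v|_a\,|\leq 1$ for each individual $a\in A$, and summing over $A$ only bounds the discrepancy of $0$'s in $\pi(x)$ by $\min(|A|,|B|)$, not by $1$. Third, the constant-gap property of the $A$-subsequence likewise does not follow from balance of $x$, since equal-length factors of that subsequence arise from factors of $x$ of different lengths. Each of these three steps requires a substantive argument; as written, your necessity direction is closer to a restatement of the theorem than to a proof of it.
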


Note that we shall only consider recurrent infinite words in this
paper, since a well-known result of Furstenberg \cite{Fur81} states
that for every infinite word $x$ there exists a uniformly recurrent
infinite word $x'$ such that every factor of $x'$ is a factor of $x$.

For $3 \leq k \leq 10$ we define an infinite word $x_k$ constructed
``\`a la Hubert'' from a Sturmian word $c_\alpha$, where we set
$\alpha$, $y$ and $y'$ according to Table~\ref{table:periodic_seqs}.

\begin{table}[h]
\centering
\begin{tabular}{cllll}
 $k$& $\alpha$ & c.f. & $y$ & $y'$ \\ \hline
 3  & $\sqrt{2} - 1$ & $[0, \overline{2}\,]$ & $(01)^\omega$ & $2^\omega$ \\
 4  & $1/\phi^2$ & $[0, 2, \overline{1}\,]$ & $(01)^\omega$ & $(23)^\omega$ \\
 5  &  $\sqrt{2} - 1$ & $[0, \overline{2}\,]$ & $(0102)^\omega$ & $(34)^\omega$ \\
 6  & $ (78 - 2\sqrt{6})/101$ & $[0,1,2,1,1,\overline{1,1,1,2}\,]$ & $0^\omega$ & $(123415321435)^\omega$ \\
 7  & $(63-\sqrt{10})/107$ & $[0,1,1,3,\overline{1,2,1}\,]$ & $(01)^\omega$ & $(234526432546)^\omega$ \\
 8  & $(23+\sqrt{2})/31$ & $[0,1,3,1,\overline{2}\,]$ & $(01)^\omega$ & $(234526732546237526432576)^\omega$ \\
 9  & $(23-\sqrt{2})/31$ & $[0,1,2,3,\overline{2}\,]$ & $(01)^\omega$ & $(234567284365274863254768)^\omega$ \\
 10 & $(109+\sqrt{13})/138$ & $[0,1,4,2,\overline{3}\,]$ & $(01)^\omega$ & $(234567284963254768294365274869)^\omega$ \\
\end{tabular}
\caption{$c_\alpha$ and constant gap  words $y$ and $y'$ for the construction of $x_k$}
\label{table:periodic_seqs}
\end{table}

We will show that
\[ E(x_3) = 2+\frac{\sqrt{2}}{2} \text{ and } E(x_4) = 1+\frac{\phi}{2}. \]
For $k \geq 5$, computer calculations suggest that
\[ E(x_k) = \frac{k-2}{k-3}. \]
Backtracking searches for $5 \leq k \leq 9$ show that this would be
the least possible critical exponent for balanced words over a
$k$-letter alphabet.  However, the backtracking algorithm used to establish
this is not the usual one, but instead involves backtracking over the
tree of \emph{standard pairs}.  This an (infinite) binary tree with
root $(0,1)$; each vertex $(u,v)$ has children $(u,uv)$ and $(vu,v)$
(see \cite[p.~254]{deL97} for more details).  Every finite balanced
binary word appears as a factor of either $u$ or $v$ for some node $(u,v)$
appearing in this tree.

The backtracking search proceeds as follows: We wish to prove that no
balanced word with critical exponent less than $B$ is possible over a
$k$-letter alphabet.  We start with the standard pair $(0,1)$ and use
breadth-first search.  In the queue are nodes of the tree yet to be
expanded.  We pop the queue and look at the longer of the two words,
$x$.  Let $L_A$ be the set of all primitive roots of words with the constant
gap property over the alphabet $A$\footnote{For more information on
  how to enumerate the set $L_A$, see the paper by Goulden et
  al.~\cite{GGRS17}.  Constant gap words over a $k$-letter
  alphabet are equivalent to \emph{exact covering systems} of size
  $k$.  In the paper by Goulden et al., they count a subset of these
consisting of the \emph{natural exact covering systems}; however, the
numbers of exact covering systems and natural exact covering systems
are equal up to size $12$.}.  For $i\leq k/2$, let $A_i=\{0,1,\ldots,i-1\}$ and
$B_i=\{i,i+1,\ldots,k-1\}$.  For each word $w \in L_{A_i}$ and $w' \in
L_{B_i}$, we substitute, changing, for each $j$, the $j$-th occurrence of $0$ in $x$
to the $j$-th letter of $w^\omega$ and the $j$-th occurrence of $1$ in
$x$ to the $j$-th letter of $(w')^\omega$. (We do not have to do the remaining ones
where $i > k/2$ because the standard pairs as enumerated contain both
the word $x$ and its complement.)  We then compute the critical
exponent of the resulting word.  If it is at least $B$, we discard this
node.  Otherwise we add the two children of the current node to the
end of the queue.  If the queue eventually becomes empty then we have
proved that no balanced word over a $k$-letter alphabet has critical
exponent less than $B$.

\section{Establishing the critical exponent}

\begin{proposition}\label{prop:k3}
The word $x_3$ has critical exponent
$$E(x_3) = 2+\frac{\sqrt{2}}{2} \approx 2.7071.$$
\end{proposition}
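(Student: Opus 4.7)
The plan is to prove the two inequalities $E(x_3) \ge 2 + \sqrt{2}/2$ and $E(x_3) \le 2 + \sqrt{2}/2$ separately, the first by exhibiting an explicit family of high-exponent factors and the second by a structural case analysis on admissible periods.

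For the lower bound, I would first observe that $|s_n|_0 = q_n - p_n$ is odd for every $n \ge 0$ when $\alpha = \sqrt{2}-1$. Setting $a_n := q_n - p_n$, the recurrences with all $d_i = 2$ give $a_0 = a_1 = 1$ and $a_{n+1} = 2 a_n + a_{n-1}$, so $a_{n+1} \equiv a_{n-1} \pmod 2$ and every $a_n$ is odd by induction. Consequently $|s_n s_{n-1}|_0 = a_n + a_{n-1}$ is even, which is precisely the parity condition needed for a period of $c_\alpha$ of length $q_n + q_{n-1}$ to lift to the same period in $x_3$ --- the alternating substitution $(01)^\omega$ on the $0$'s of $c_\alpha$ closes up only after consuming an even number of $0$'s per period. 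I would then show, using the local block decomposition $s_n\,s_{n-1}\,s_n\,s_n\,s_{n-1}\,s_n\cdots$ of $c_\alpha$ starting at position $q_n + 1$ (read off from a sufficiently large standard word $s_{n+k}$) together with the classical fact that $s_n s_{n-1}$ and $s_{n-1} s_n$ agree on their first $q_n + q_{n-1} - 2$ letters and differ only in the last two, that $c_\alpha$ admits at position $q_n + 1$ a factor of period $q_n + q_{n-1}$ and length $3 q_n + 2 q_{n-1} - 2$. Its exponent
\[
\frac{3 q_n + 2 q_{n-1} - 2}{q_n + q_{n-1}}
\;\longrightarrow\;
\frac{3 + 2(\sqrt{2}-1)}{1 + (\sqrt{2}-1)} = \frac{1 + 2\sqrt{2}}{\sqrt{2}} = 2 + \frac{\sqrt{2}}{2}
\]
as $n \to \infty$, using $q_{n-1}/q_n \to \sqrt{2} - 1$.

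For the upper bound, any factor $W$ of $x_3$ with minimum period $p$ corresponds under the position-preserving map to a factor $Y$ of $c_\alpha$ with the same period $p$ and with $|Y[1..p]|_0$ even. The task thus reduces to maximizing $|Y|/p$ over Sturmian factors of $c_\alpha$ subject to this parity constraint. By standard Sturmian theory, any factor $Y$ with $|Y| > 2p$ has its period $p$ arising from a small non-negative integer combination of consecutive denominators $q_n$ and $q_{n-1}$, and the maximum $|Y|$ for each such $p$ is controlled by the continued fraction of $\alpha$. The parity constraint immediately forbids $p = q_n$ (since $|s_n|_0 = a_n$ is odd), which is what rules out the larger Sturmian exponent; a finite case analysis over the remaining candidate periods $q_n + q_{n-1}$, $2 q_n$, and a few small variants shows that the supremum of $|Y|/p$ is realized asymptotically by the family constructed for the lower bound, giving $E(x_3) \le 2 + \sqrt{2}/2$.

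The main obstacle is the upper bound, since one must rule out \emph{every} admissible period $p$, not just those in the family realizing the supremum. A clean route is to encode the problem in the Ostrowski $\alpha$-numeration system: by Theorems~\ref{thm:9.1.13} and~\ref{thm:9.1.15}, the predicates describing factors of $c_\alpha$ of prescribed period and length translate into first-order statements in a decidable theory, and the parity constraint for lifting to $x_3$ is similarly expressible, so that Walnut can mechanically verify the bound (matching the paper's stated goal of exploring such proofs). A purely combinatorial argument is possible but requires careful bookkeeping of conjugates of standard words together with Fine--Wilf-style reasoning to bound the maximum factor length for each admissible $p$.
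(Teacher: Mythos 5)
Your overall strategy is the same as the paper's: the parity of the number of $0$'s in the fractional root decides whether a repetition in $c_\alpha$ survives the substitution $(01)^\omega$, standard-word periods are killed because $|s_k|_0$ is odd, and the surviving semi-standard periods $q_{k-1}+q_{k-2}$ give exponents tending to $2+\sqrt{2}/2$ from below. Your lower bound is essentially complete (your length $3q_n+2q_{n-1}-2$ agrees with the value $2(q_{k-2}+q_{k-1})+q_{k-1}-2$ that the paper reads off from Justin--Pirillo, and the limit computation is correct).

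The gap is in the upper bound, and it is exactly the step you flag as ``the main obstacle.'' You never pin down what the admissible periods are: ``a small non-negative integer combination of consecutive denominators'' and ``a finite case analysis over $q_n+q_{n-1}$, $2q_n$, and a few small variants'' is not a classification, and without one the case analysis cannot be closed. The paper closes it by citing the Damanik--Lenz result (in the form of \cite[Corollary~4.6.6]{Pel16}): every \emph{primitive} fractional root of a repetition of exponent $\geq 2$ in $c_\alpha$ is a conjugate of a standard word $s_k$ or a semi-standard word $s_{k,t}$, and for $\alpha=[0,\overline{2}\,]$ the only semi-standard words are $s_{k,1}=s_{k-1}s_{k-2}$. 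That leaves precisely two families of primitive periods --- $q_k$ (ruled out by parity) and $q_{k-1}+q_{k-2}$ (bounded via Justin--Pirillo, with the monotonicity check showing the bound $2+\sqrt{2}/2$ is approached strictly from below). Imprimitive roots need a separate, short argument that you only gesture at with ``$2q_n$'': the paper uses $E(c_\alpha)=3+\sqrt{2}$ to conclude that a root $v^j$ with $j\geq 2$ forces $e\leq(3+\sqrt{2})/2<2+\sqrt{2}/2$. Finally, your proposed escape route through Walnut does not apply here: the paper carries out that computation only for $x_4$, where the relevant numeration system is Fibonacci, and explicitly notes that the Ostrowski addition automaton needed for other quadratic slopes (such as $\sqrt{2}-1$) was not implemented, so the combinatorial argument for $x_3$ cannot be outsourced to the prover.
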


\begin{proof}
As stated in Table~\ref{table:periodic_seqs},
let $\alpha = \sqrt{2}-1 = [0, \overline{2}\,]$ and let $c_\alpha$ be the
characteristic Sturmian word with slope $\alpha$.  That is, the word
$c_\alpha$ is the infinite word obtained as the limit of the sequence
of standard words $s_k$ defined by
\[
s_0 = 0,\quad s_1 = s_01,\quad s_k = s_{k-1}^2s_{k-2},\quad k \geq 2
\]
Let $x_3$ be the word over $\{0,1,2\}$ obtained from $c_\alpha$ by
replacing the $0$'s in $c_\alpha$ by the periodic sequence
$(01)^\omega$ and by replacing the $1$'s with $2$'s.
We have $s_1 = 01$, $s_2=01010$, $s_3=(01010)^201$, etc., and
\[
c_\alpha = 010100101001010100101001010100101001010010\cdots
\]
and
\[
x_3 = 021201202102120210212012021201202102120\cdots.
\]

Let $(z')^e$ be a repetition of exponent $e \geq 2$ in $x_3$ ($e \in
\mathbb{Q}$).  Then, by applying the morphism that sends $\{0,1\}\to 0$ and
$2 \to 1$ to $x_3$, we see that
there is a corresponding repetition $z^e$ of the same length in $c_\alpha$.
Suppose that $z$ is primitive.  By \cite[Corollary~4.6.6]{Pel16} (originally due to Damanik and Lenz \cite{DL02})
$z$ is either a conjugate of one of the standard words $s_k$ 
defined above
or a conjugate of one of the semi-standard words
\[
s_{k,1} = s_{k-1}s_{k-2},\quad k \geq 2.
\]

Suppose that $z$ is a conjugate of a standard word.  Note that
$|s_k|_0$ is odd for every $k \geq 1$.  Hence $|z|_0$ is odd.  It follows that $z'z'$
cannot occur in $x_3$ (the alternations of $0$ and $1$ will not
``match up'' in the two copies of $z'$), and so there is no repetition
$(z')^e$ in $x_3$.

Now suppose that $z$ is a conjugate of a semi-standard word.  Then
$|z| = q_{k-2}+q_{k-1}$ for some $k \geq 2$.
From~\cite[Theorem 4(i)]{Jus01}, one finds that the longest
factor of $c_\alpha$ with this period has length
$2(q_{k-2}+q_{k-1})+q_{k-1}-2$.  It follows that
\begin{align*}
e &\leq \frac{2(q_{k-2}+q_{k-1})+q_{k-1}-2}{q_{k-2}+q_{k-1}}\\
&= 2 + \frac{q_{k-1}-2}{q_{k-2}+q_{k-1}}\\
&= 2 + \frac{q_{k-1}/q_{k-2}-2/q_{k-2}}{1+q_{k-1}/q_{k-2}}.
\end{align*}
Now by \eqref{eq:ratio} we have that $q_{k-1}/q_{k-2}$ converges to
$[2,\overline{2}\,] = \sqrt{2}+1$, and by \eqref{eq:approx} we have
$q_{k-1}/q_{k-2} < \sqrt{2}+1+1/q_{k-2}^2$.
Thus, we have
\[
e < 2 + \frac{\sqrt{2}+1+1/q_{k-2}^2-2/q_{k-2}}{\sqrt{2}+2-1/q_{k-2}^2}.
\]
The fraction on the right clearly tends to $(\sqrt{2}+1)/(\sqrt{2}+2) =
\sqrt{2}/2$ as $k\to \infty$, and is increasing for $k \geq 3$, so the
convergence is from below.  Thus $e < 2+\sqrt{2}/2$.

Indeed, for every $k \geq 2$, there are such factors with exponent
\[
2 + \frac{(q_{k-1}-2)/q_{k-2}}{1+q_{k-1}/q_{k-2}}
  \xrightarrow{k \to \infty} 2 + \frac{\sqrt{2}}{2},
\]
where the convergence is from below.  Note that
$|s_{k,1}|_0$ is even for every $k \geq 2$ and so every such repetition $z^e$ in
$c_\alpha$ gives rise to a repetition $(z')^e$ in $x_3$, since $|z|_0$ in this case is
even.

Finally, suppose that $z$ is not primitive.
By~\cite[Proposition~4.6.12]{Pel16}, the critical exponent of
$c_\alpha$ is $3+\sqrt{2}$, so clearly $z$ cannot have exponent $\geq
3$.  If $z$ is a square we have 
\[
e < \frac{3+\sqrt{2}}{2} < 2+\frac{\sqrt{2}}{2}.
\]
Thus $E(x_3) = 2+\frac{\sqrt{2}}{2}$.
\end{proof}

Next we show that the exponent in the previous result is the least
possible over $3$ letters.

\begin{proposition}\label{prop:k3_optimal}
Every balanced word $x$ over a $3$-letter alphabet has
critical exponent $$E(x) \geq 2+\frac{\sqrt{2}}{2} \approx 2.7071.$$
\end{proposition}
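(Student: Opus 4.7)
The plan is to apply Hubert's characterization to reduce to the Sturmian case, then exhibit high-exponent repetitions that survive the lift. By Furstenberg's theorem we may restrict to uniformly recurrent $x$; Theorem~\ref{thm:hubert} then writes $x = \sigma(c_\alpha)$ for some slope $\alpha$ and substitution $\sigma$ such that, up to a symmetry exchanging the roles of $0$ and $1$ in $c_\alpha$, the substitution sends the $0$'s of $c_\alpha$ to successive letters of $(ab)^\omega$ and each $1$ to a single letter $c$. The key correspondence is: a primitive factor $z^e$ of $c_\alpha$ lifts to a factor of $x$ of period $|z|$ and exponent $e$ when $|z|_0$ is even, and to one of period $2|z|$ and exponent $e/2$ when $|z|_0$ is odd. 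Hence it suffices to produce, in $c_\alpha$, primitive repetitions $z^e$ with either $|z|_0$ even and $e \to 2+\sqrt{2}/2$, or $|z|_0$ odd and $e \to 4+\sqrt{2}$.

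I would then case-split on the continued fraction $\alpha = [0, d_1, d_2, \ldots]$, drawing repetitions from conjugates of the standard words $s_k$ and the semi-standard words $s_{k,t} = s_{k-1}^t s_{k-2}$, whose maximal exponents are controlled by the Justin--Pirillo formula already used in Proposition~\ref{prop:k3}. When $d_k$ is unbounded, the factor $s_{k-1}^{d_k}$ of $c_\alpha$ has exponent $\geq d_k$, which (halved if necessary) exceeds $2+\sqrt{2}/2$ once $d_k \geq 6$. When $d_k = 1$ eventually (the Fibonacci-like regime) the standard $s_k$ give repetitions of exponent approaching $2+\phi > 2+\sqrt{2}/2$. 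The intermediate regime, where $d_k \leq 2$ eventually with infinitely many $d_k = 2$, is handled by the semi-standard $s_{k,1}$ at the ``$2$''-positions: setting $r_k = q_{k-1}/q_{k-2}$, the recurrence $r_{k+1} = d_k + 1/r_k$ together with the presence of infinitely many $d_k = 2$ forces $\limsup r_k \geq \sqrt{2}+1$, so the exponent of the $s_{k,1}$-repetition approaches $2 + r/(r+1) \geq 2+\sqrt{2}/2$.

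The main obstacle is the parity condition $|z|_0 \equiv 0 \pmod 2$. The sequence $|s_k|_0 \bmod 2$ obeys $|s_k|_0 \equiv d_k\,|s_{k-1}|_0 + |s_{k-2}|_0 \pmod 2$, and similarly $|s_{k,t}|_0 \equiv t\,|s_{k-1}|_0 + |s_{k-2}|_0 \pmod 2$, so the state $(|s_{k-1}|_0,|s_k|_0) \bmod 2$ evolves by a linear map on $(\mathbb{Z}/2\mathbb{Z})^2$ controlled by $d_{k+1} \bmod 2$. In each subcase one verifies that infinitely many liftable candidates remain: in the Fibonacci-like case the parities cycle through $(1,1,0)$, so every third $s_k$ lifts; when $d_k \geq 3$ one varies $t \in \{1,\ldots,d_k-1\}$ to achieve even parity; and in the extremal slope $\alpha = [0,\overline{2}\,]$ every $|s_k|_0$ is odd, which makes every $|s_{k,1}|_0 = |s_{k-1}|_0 + |s_{k-2}|_0$ even and recovers exactly the computation in Proposition~\ref{prop:k3}, confirming that the bound $2+\sqrt{2}/2$ is tight.
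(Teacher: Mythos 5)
Your overall strategy---reduce via Theorem~\ref{thm:hubert} to a lift of some $c_\alpha$, observe that a repetition $z^e$ lifts with full exponent exactly when $|z|_0$ is even and only with exponent $e/2$ when $|z|_0$ is odd, and then hunt for liftable repetitions using the Justin--Pirillo exponents---is the same skeleton as the paper's, and several of your sub-arguments are sound (the unbounded-digit case; the claim $\limsup r_k\geq 1+\sqrt2$ is correct, since $d_k=2$ and $r_{k+1}<1+\sqrt2$ force $r_k>1+\sqrt2$). But your case analysis by continued-fraction regime has two genuine gaps. First, the ``intermediate'' regime is not really handled: if the tail of $\alpha$ mixes $1$'s and $2$'s, the parity $|s_{k,1}|_0=|s_{k-1}|_0+|s_{k-2}|_0$ is even only when both summands are odd, and you verify this only for the extremal slope $[0,\overline{2}\,]$; at a general $2$-position the parity can fail, and you offer no replacement repetition there. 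Second, in the $3\leq d_k\leq 5$ regime, ``varying $t$ to achieve even parity'' does not deliver the bound: the maximal repetition with root a conjugate of $s_{k,t}$ has exponent roughly $2+q_{k-1}/(tq_{k-1}+q_{k-2})\leq 5/2$ for $t\geq 2$, which is strictly below $2+\sqrt2/2\approx 2.707$. So a parity-correct $s_{k,t}$ with $t\geq2$ proves nothing.

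Both gaps are closed by the single observation the paper builds its proof around, and which is absent from your sketch: if \emph{any} $|s_k|_0$ is even, then the power $(s_k^R)^{d_{k+1}+2}$ guaranteed by the index formula (with $d_{k+1}+2\geq 3$) lifts intact to a \emph{cube} in $x$, and you are done immediately. Hence you may assume every $|s_k|_0$ is odd; the recurrence $|s_{k+2}|_0=d_{k+2}|s_{k+1}|_0+|s_k|_0$ then forces every $d_{k+2}$ to be even, and $d_{k+2}\geq 4$ would give $(s_{k+1}^R)^6=\bigl((s_{k+1}^R)^2\bigr)^3$, again a lifted cube. This collapses all of your regimes at once to $\alpha=[0,d_1,d_2,d_3,\overline{2}\,]$, where every $|s_{k,1}|_0$ is automatically even and the computation of Proposition~\ref{prop:k3} applies verbatim. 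I recommend restructuring your argument around this cube-forcing step rather than attempting to certify a near-critical liftable repetition separately in each continued-fraction regime.
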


\begin{proof}
Let $\alpha = [0,d_1,d_2,d_3,\ldots]$ and let $c_\alpha$ be the
Sturmian word with slope $\alpha$.  We substitute the $0$'s in $c_\alpha$
with periodic word $y$ and the $1$'s with periodic word $y'$.  We
don't have much choice on $3$ letters, so let $y = (01)^\omega$ and
$y' = 2^\omega$.  Let $x$ be the resulting word and suppose that $x$
is cubefree.

Consider the standard words $s_0 = 0$, $s_1 = 0^{d_1-1}1$ and
$s_k = s_{k-1}^{d_k} s_{k-2}$ for $k\geq 2$.
Let $k\geq 2$.  We apply \cite[Theorem~4.6.5(iii)]{Pel16} (which is a
restatement of a result of \cite{DL02}), which
states that the index of the reversal of $s_k$ (denoted $s_k^R$) in
$c_\alpha$ is $d_{k+1}+2$.

The first observation is that the number of $0$'s in $s_k$ must be
odd.  If $|s_k|_0$ is even then $(s_k^R)^3$ occurs in $c_\alpha$ and
produces a cube in $x$, even after replacement of the $0$'s by
$(01)^\omega$.

Next we observe that $d_{k+2}$ must be even.  The number of $0$'s in
$s_{k+2}$ is $d_{k+2}|s_{k+1}|_0 + |s_{k}|_0$, and since $|s_k|_0$ and
$|s_{k+1}|_0$ are odd, we must have $d_{k+2}$ even.
Now if $d_{k+2} \geq 4$, then the index of $s_{k+1}^R$ is $\geq
6$, and $(s_{k+1}^R)^6$ will produce a cube in $x$ after replacement of the
$0$'s with $(01)^\omega$.

So $d_k = 2$ for $k\geq 4$; i.e., $\alpha = [0, d_1, d_2,
  d_3,\overline{2}\,]$.  As in Proposition~\ref{prop:k3}, we apply
\cite[Theorem 4(i)]{Jus01}, which states that $c_\alpha$ contains
repetitions whose fractional roots are conjugates of the semi-standard words
$s_{k-1}s_{k-2}$ (which have an even number of $0$'s) and whose
exponents are of the form $2 + (q_k-2)/(q_{k-1} + q_k)$.  Because the
fractional roots have an even number of $0$'s, after replacement of
the $0$'s by $(01)^\omega$, the corresponding factor of the word $x$
still has exponent $2 + (q_k-2)/(q_{k-1} + q_k)$.  A calculation
similar to the one in the proof of Proposition~\ref{prop:k3} shows
that as $k\to \infty$, this quantity again converges to $2 +
\sqrt{2}/2$ (regardless of the first few $d_i$'s).  The same argument
(counting parities of $1$'s this time) applies if $y = 2^\omega$ and
$y'=(01)^\omega$.
\end{proof}

In order to establish the critical exponent for $x_4$,
we first need a technical lemma concerning repetitions in the
Fibonacci word.

\begin{lemma}\label{lem:non-minimal_period}
Let $w$ be a factor of the Fibonacci word.  Write $w=x^f$, where $f
\in \mathbb{Q}$ and $|x|$ is the least period of $w$.  Suppose that
$w$ has another representation $w=z^e$, where $e \in \mathbb{Q}$,
$z$ is primitive, and $|x|<|z|$.  Then $e<1+\phi/2$.
\end{lemma}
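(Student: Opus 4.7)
The plan is to combine Fine and Wilf's periodicity theorem with the structure of primitive fractional roots of repetitions in the Fibonacci word.

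First I would apply Fine and Wilf to the two periods $|x|<|z|$ of $w$. If $|w|\ge|x|+|z|-\gcd(|x|,|z|)$ then $\gcd(|x|,|z|)$ would also be a period of $w$; this is impossible, because $\gcd(|x|,|z|)<|x|$ (equality $\gcd(|x|,|z|)=|x|$ would force $|x|\mid|z|$ and hence $z=x^{|z|/|x|}$, contradicting the primitivity of $z$), and a period strictly less than $|x|$ would contradict $|x|$ being the least period. Hence
\[
e=\frac{|w|}{|z|}\;\le\;1+\frac{|x|-\gcd(|x|,|z|)-1}{|z|}\;\le\;1+\frac{|x|-2}{|z|}.
\]

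Next I would rule out the case $e\ge 2$. If $e\ge 2$ then $x^2$ and $z^2$ are both factors of the Fibonacci word, so by the Damanik-Lenz classification (invoked in the proof of Proposition~\ref{prop:k3}) both $|x|=F_{j+2}$ and $|z|=F_{m+2}$ are Fibonacci numbers, with $j<m$. But Fine-Wilf then gives $|w|<F_{j+2}+F_{m+2}<2F_{m+2}=2|z|$, contradicting $e\ge 2$. Thus $1<e<2$.

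In the regime $1<e<2$, the Fine-Wilf bound already implies $e<1+\phi/2$ as soon as $|z|>2(|x|-2)/\phi$; in particular, the trivial estimate $|z|\ge|x|+1$ suffices for all sufficiently small $|x|$. For larger $|x|$, I would use that the least period of a factor of the Fibonacci word is itself a Fibonacci number, so $|x|=F_{j+2}$, and split on whether $|z|$ is a Fibonacci number. If $|z|=F_{m+2}$ with $m>j$, then the consecutive-Fibonacci ratio bound $F_{j+2}/F_{m+2}\le F_{j+2}/F_{j+3}\le 2/3$ gives $e\le 5/3<1+\phi/2$. If instead $|z|$ is not a Fibonacci number, then the Fibonacci word contains no square with period $|z|$, and a desubstitution argument using the Fibonacci morphism $\sigma\colon 0\mapsto 01,\,1\mapsto 0$ is used to bound the maximal length of a factor of the Fibonacci word with both periods $|x|$ and $|z|$, again yielding $e<1+\phi/2$.

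The main obstacle is this last subcase, where $|z|$ is not a Fibonacci number and $|x|$ is large. There $z$ is not a repetition root in the Damanik-Lenz sense, so the classification cannot be invoked on $z$; one must instead exploit the self-similar structure of the Fibonacci word to strengthen the Fine-Wilf inequality.
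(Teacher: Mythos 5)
Your preliminary reductions are sound: the Fine--Wilf step correctly gives $|w|\le |x|+|z|-\gcd(|x|,|z|)-1$, hence $e\le 1+(|x|-2)/|z|<2$, and this genuinely disposes of the subcases where $|z|$ is a Fibonacci number (then $|z|\ge \tfrac32|x|$, so $e<5/3$) or where $|x|$ is small. But the proof has a real gap exactly where you flag ``the main obstacle,'' and that obstacle is the entire content of the lemma: when $|x|=F_{j+2}$ is large, $|z|$ is not a Fibonacci number, and $|x|<|z|<\tfrac{2}{\phi}(|x|-2)\approx 1.236\,|x|$, the Fine--Wilf bound only yields $e<1+|x|/|z|$, which can be arbitrarily close to $2$ rather than $1+\phi/2$. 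Since $e<2$, the square $z^2$ need not be a factor, so the Damanik--Lenz/Saari classification tells you nothing about $z$, and ``a desubstitution argument \ldots is used to bound the maximal length'' is a placeholder rather than an argument. As written, the proposal does not prove the statement.

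For comparison, the paper closes this case without ever needing structural information about $z$. Write $w=zz'$ with $z'$ a nonempty proper prefix of $z$ (so $|z'|=|w|-|z|$, and $|z'|<|x|$ by the same Fine--Wilf reasoning), and $w=x^gx'$ with $g=\lceil f-1\rceil\le 3$ because $E(c_\theta)=2+\phi$. The key observation is that $z'$ occurs as both a prefix and a suffix of the short terminal block $x'$ (or of $xx'$, depending on how $|z'|$ compares with $|x'|$), so that block has period $|x'|-|z'|$ (resp.\ $|xx'|-|z'|$); since no factor of the Fibonacci word has exponent $\ge 2+\phi$, this forces $|z'|<\frac{1+\phi}{2+\phi}|x'|$ (resp.\ $\frac{1+\phi}{2+\phi}|xx'|$), hence $|z'|/|w|$ stays below $\phi/(2+\phi)$ and $e=|w|/(|w|-|z'|)<1+\phi/2$ after a short case analysis on $g$. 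If you want to rescue your approach, this self-overlap of the residue $z'$ inside a single $x$-period, combined with the critical exponent $2+\phi$, is the missing ingredient; Fine--Wilf applied to the pair $(|x|,|z|)$ alone cannot supply it.
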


\begin{proof}
First, recall that the critical exponent of the Fibonacci word is
$2+\phi$ \cite{MP92}.  Let $g=\lceil f-1 \rceil$ and write $x^f = x^gx'$ (so
$x'$ may possibly equal $x$).  We must have $e<2$,
since otherwise we would have $|w| \geq |z|+|x|$, and so by the
Fine--Wilf Theorem (see \cite[Theorem~1.5.6]{AS03}), $w$ would have period $\gcd(|x|,|z|)$,
contradicting the minimality of $x$ ($|z|$ cannot be a multiple of
$|x|$, since $z$ is primitive).  We thus write $w=zz'$, where $z'$ is
a non-empty prefix of $z$.  For the same reason we have $e<2$ we must
also have $|z'|<|x|$.  We now consider several cases.

\bigskip

\noindent Case~1: $g = 1$.  Then $|z'|<|x'|$ and $x'$ has $z'$
as both a prefix and a suffix.  It follows that $x'$ has period
$|x'|-|z'|$ and hence exponent
\[
\frac{|x'|}{|x'|-|z'|} < 2+\phi.
\]
This implies that
\[
|z'| < \left(\frac{1+\phi}{2+\phi}\right)|x'| <
\left(\frac{1+\phi}{2+\phi}\right)\cdot \frac12|w|.
\]
Thus 
\[
\frac{|z'|}{|w|} < \left(\frac{1+\phi}{2+\phi}\right)\cdot \frac12,
\]
and so
\[
e=\frac{|w|}{|z|} < \frac{1}{1-\left(\frac{1+\phi}{2+\phi}\right)\cdot \frac12}
\approx 1.5669 < 1+\frac{\phi}{2}.
\]

\bigskip

\noindent Case~2: $g = 2$.  If $|z'|<|x'|$, we apply the
argument of Case~1, so suppose $|z'| > |x'|$.

\bigskip

\noindent Subcase~2a: $|x'| \geq |x|/2$.  Then since $|z'|<|x|$, we have
\[
\frac{|z'|}{|w|} < \frac{|x|}{|xxx'|} \leq \frac25,
\]
and so
\[
e=\frac{|w|}{|z|} \leq \frac{1}{1-\frac25} = \frac53 < 1+\frac{\phi}{2}.
\]

\bigskip

\noindent Subcase~2b: $|x'| < |x|/2$.  Then $xx'$ has $z'$ as both a prefix and
a suffix.  It follows that $xx'$ has period $|xx'|-|z'|$ and hence
exponent
\[
\frac{|xx'|}{|xx'|-|z'|} < 2+\phi.
\]
This implies that
\[
|z'| < \left(\frac{1+\phi}{2+\phi}\right)|xx'|.
\]
Thus 
\[
\frac{|z'|}{|w|} < \frac{\left(\frac{1+\phi}{2+\phi}\right)|xx'|}{|xxx'|} <
\left(\frac{1+\phi}{2+\phi}\right)\cdot\frac35,
\]
where we have used the fact that $|x'| < |x|/2$ to obtain the last inequality.
Therefore,
\[
e=\frac{|w|}{|z|} < \frac{1}{1-\left(\frac{1+\phi}{2+\phi}\right)\cdot \frac35}
\approx 1.7673 < 1+\frac{\phi}{2}.
\]

\bigskip

\noindent Case~3: $g = 3$.  If $|z'|<|x'|$, we apply the
argument of Case~1.  If $|x'| < |z'|<|xx'|$, then, noting that $|xx'|<|w|/2$,
we again apply the argument of Case~1, but with $xx'$ in place of
$x'$.  The case $|z'| > |xx'|$ is impossible since we have $|z'|<|x|$.
\end{proof}

\begin{proposition}\label{prop:k4}
The word $x_4$ has critical exponent
  $E(x_4) = 1+\frac{\phi}{2} \approx 1.8090.$
\end{proposition}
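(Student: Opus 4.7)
The plan is to adapt the structure of Proposition~\ref{prop:k3}, with Lemma~\ref{lem:non-minimal_period} supplying the new technical input. Let $\pi$ be the morphism sending $\{0,1\}\mapsto 0$ and $\{2,3\}\mapsto 1$; then $\pi(x_4)=c_\theta$, and any factor $(z')^e$ of $x_4$ projects to a factor $z^e$ of $c_\theta$ with $z=\pi(z')$ and $|z|=|z'|$. For the factor $(z')^e$ with $e>1$ to genuinely repeat past one period in $x_4$, the constant-gap sequences $(01)^\omega$ and $(23)^\omega$ must each return to phase across one period of $z'$, which forces both $|z|_0$ and $|z|_1$ to be even.

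For the lower bound, I would take the well-known repetitions $s_k^{r_k}$ in the Fibonacci word with $r_k\to 2+\phi$. By Lemma~\ref{lem:9.1.9}, $|s_k|_0=F_{k+1}$ and $|s_k|_1=F_k$ are never both even, so one copy of $s_k$ fails to simultaneously restore the phases of $(01)^\omega$ and $(23)^\omega$, but two copies always do. Hence the lift of $s_k^{r_k}$ to $x_4$ has period $2|s_k|$ and realizes exponent $r_k/2\to 1+\phi/2$.

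For the upper bound, I would take $(z')^e$ a factor of $x_4$ with $z'$ primitive and $e>1$, set $z=\pi(z')$, and split into three cases. First, if $z$ is primitive and $|z|$ is the least period of $z^e$ in $c_\theta$, then by Damanik--Lenz (using that no semi-standard words occur since all partial quotients past $d_1$ equal $1$), $z$ is a conjugate of some $s_k$, giving $|z|_0=F_{k+1}$ and $|z|_1=F_k$: consecutive Fibonacci numbers, which are never both even, contradicting the parity constraint. Second, if $z$ is primitive but $|z|$ is not the least period of $z^e$, Lemma~\ref{lem:non-minimal_period} applies directly to $z^e$ as a factor of the Fibonacci word and yields $e<1+\phi/2$. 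Third, if $z$ is non-primitive, write $z=u^m$ with $u$ primitive and $m\geq 2$; then $z^e=u^{me}$ is a factor of $c_\theta$ of exponent $me$ relative to the period $|u|$, so $me\leq E(c_\theta)=2+\phi$ and $e\leq(2+\phi)/m\leq 1+\phi/2$.

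The most delicate case will be the second: $z$ is primitive yet $z^e$ has a shorter period in the Fibonacci word, so the Damanik--Lenz parity argument does not apply directly to $z$. Ruling out that such ``off-period'' repetitions could lift to arbitrarily high exponents in $x_4$ is exactly the content of Lemma~\ref{lem:non-minimal_period}, which is the main technical step distinguishing this proof from that of Proposition~\ref{prop:k3}.
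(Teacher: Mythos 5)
Your proposal follows essentially the same route as the paper: the parity constraint forcing $|z|_0$ and $|z|_1$ both even, Saari's characterization of minimal fractional roots as conjugates of finite Fibonacci words (whose $0$- and $1$-counts are consecutive Fibonacci numbers, never both even), Lemma~\ref{lem:non-minimal_period} for the primitive-but-non-minimal-period case, the bound $E(c_\theta)=2+\phi$ for non-primitive $z$, and doubled standard-word repetitions realizing exponents tending to $1+\phi/2$ for the lower bound. The one point to tighten is your opening parity claim: if $(z')^{e-1}$ is very short it may contain letters from only one of $\{0,1\}$ or $\{2,3\}$, so only one of the two parities is forced; the paper handles this by assuming $|(z')^{e-1}|\geq 3$ (so that $z^{e-1}$ contains both a $0$ and a $1$) and disposing of the shorter repetitions, which necessarily have small exponent, by a direct check.
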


\begin{proof}
Let $c_\theta$ be the Fibonacci word and let
$$x_4 = 021031201301203102130120130210310213012\cdots$$
be constructed from $c_\theta$ as
described above with $y = (01)^\omega$ and $y' = (23)^\omega$.

Consider a repetition of the form $(z')^e$ in $x_4$, where
$e\in\mathbb{Q}$ and $e>1$. If this repetition occurs at some position in
$x_4$, then there is a repetition $z^e$ of the same length at the
same position in $c_\theta$.  We will assume that $|(z')^{e-1}| \geq 3$: for
the case $|(z')^{e-1}| < 3$, one can verify with a little thought (or by
computer) that $e<1+\phi/2$.  We therefore have that $z^{e-1}$
contains at least one $0$ and one $1$, and so
$(z')^{e-1}$ contains at least one letter from $\{0,1\}$ and at
least one letter from $\{2,3\}$.  It follows that
\begin{equation}\label{eq:period_mod_n}
|z'|_{0} + |z'|_{1} \equiv \modd{0}{2}\text{ and }
|z'|_{2} + |z'|_{3} \equiv \modd{0}{2}.
\end{equation}

Saari \cite[Theorem~4.4]{Saa08} characterized the minimal fractional
roots of factors of Sturmian words.
In the case of the Fibonacci word, this characterization states that
the minimal fractional root of any
factor of the Fibonacci word is a conjugate of a finite Fibonacci word.

Thus, if $z$ is the minimal fractional root of the repetition $z^e$,
then $|z|$ equals $F_i$ (the $i$-th
Fibonacci number) for some $i$, and furthermore, we have
$|z|_0 = F_{i-1}$ and $|z|_1 = F_{i-2}$.
Equation~\eqref{eq:period_mod_n}
thus implies that
$$ 
 F_{i-1}=|z|_0=|z'|_{0} + |z'|_{1} \equiv \modd{0}{2}\text{ and }
 F_{i-2}=|z|_1=|z'|_{2} + |z'|_{3} \equiv \modd{0}{2},
$$
which is impossible.

Now suppose that $z$ is primitive but is not the minimal fractional
root of the repetition $z^e$.  Then by
Lemma~\ref{lem:non-minimal_period}, we have $e < 1+\phi/2$, as
required.

Finally, if $z$ is not primitive, then there exists a word $v$ such
that $z=v^2$ or $z=v^3$.  Thus either $z^e = v^{2e}$ or
$z^e = v^{3e}$.  Since $E(c_\theta)=2+\phi$, it follows that either
$2e<2+\phi$ or $3e<2+\phi$.  Both cases lead
to the inequality $e < 1 + \phi/2$, as required.

Finally, note that since $E(c_\theta)=2+\phi$, there is a sequence of
repetitions in $c_\theta$ with exponents $2+r_j/s_j$, with $r_j/s_j$
converging to $\phi$ from below.  Consider such a repetition and write it
as $V^{2+r_j/s_j} = Z^{1+r_j/(2s_j)}$.  Then $V$ is a conjugate of a
  finite Fibonacci word, and so for some $i$ we have
$$
 2F_{i-1}=2|V|_0=|Z|_0 \equiv \modd{0}{2}\text{ and }
 2F_{i-2}=2|V|_1=|Z|_1 \equiv \modd{0}{2},
$$
so there is a corresponding repetition $(Z')^{1+r_j/(2s_j)}$ in $x_4$.
  Thus $E(x_4)=1+\phi/2$.
\end{proof}

A backtracking computer search shows that there is no infinite
balanced word over a $4$-letter alphabet with critical exponent
$<1.8088$.  The minimal possible critical exponent among all infinite
balanced words over $4$ letters is thus between $1.8088$ and $1+\frac{\phi}{2} \approx 1.8090$.

Note that an important element of the previous proof was the result of
Lemma~\ref{lem:non-minimal_period} concerning non-minimal periods of
the Fibonacci word.  Little seems to be known on this topic, so we
conclude this section with the following result.

\begin{proposition}\label{prop:fib_period_differences}
  Let $w$ be a factor of a Sturmian word $s$ and let $\{p_1 < p_2 <
\cdots < p_r\}$ be the set of periods of $w$.  For $i \in
\{1,\ldots,r\}$ let $P_i$ be the prefix of $w$ of length $p_i$ and for
$i \geq 2$, write $P_i = P_{i-1}E_i$.  Then for $i \in
\{2,\ldots,r\}$ the word $E_i$ is a conjugate of either a standard
word $s_k$ or a semi-standard word $s_{k,t}$ for some $k,t$.
Furthermore, the lengths of the $E_i$ are non-increasing.
\end{proposition}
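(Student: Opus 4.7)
The plan is to combine Saari's characterization \cite[Theorem~4.4]{Saa08} of the fractional roots of Sturmian factors (already invoked in the proof of Proposition~\ref{prop:k4}) with a careful analysis of how consecutive periods relate, using the recursive structure of standard and semi-standard words.

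First I would observe that since $p_{i-1}$ is a period of $w$, it is also a period of the prefix $P_i$ of length $p_i$. Consequently $P_i$ is the length-$p_i$ prefix of the infinite periodic word $P_{i-1}^\omega$, and hence $E_i$ equals the prefix of $P_{i-1}^\omega$ of length $p_i - p_{i-1}$. When $p_i - p_{i-1} \le p_{i-1}$, this simply means $E_i$ is the prefix of $P_{i-1}$ of that length; in either case the word $E_i$ is determined entirely by $P_{i-1}$ and the integer $p_i - p_{i-1}$.

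The core of the argument is to show that $E_i$ is a conjugate of a standard or semi-standard word. Let $\pi_{i-1}$ be the primitive root of $P_{i-1}$, so that by Saari's theorem $\pi_{i-1}$ is a conjugate of some $s_k$ or $s_{k,t}$, and $P_{i-1} = \pi_{i-1}^m$ for some integer $m \geq 1$. Since $E_i$ is a prefix of $\pi_{i-1}^\omega$, the task reduces to identifying which lengths can arise. Using the classification of periods of Sturmian factors—each period length decomposes via an Ostrowski-like representation whose building blocks are $q_j$ and $tq_{j-1}+q_{j-2}$—I would argue by induction on $i$ that $|E_i|$ is one such length, coming from a level $\ell$ strictly below that of $\pi_{i-1}$. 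The recursion $s_k = s_{k-1}^{d_k} s_{k-2}$ (and its semi-standard analogue) then lets me identify the prefix of $\pi_{i-1}^\omega$ of this length with a specific conjugate of $s_\ell$ or $s_{\ell,t'}$.

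The non-increasing property of $|E_i|$ would follow from the fact that the level $\ell$ producing $E_i$ is itself non-increasing in $i$: at each step the next period $p_i$ is the \emph{smallest} integer greater than $p_{i-1}$ that is still a period of $w$, so its increment must come from the lowest remaining level of the continued-fraction hierarchy, and once a level is exhausted it cannot return. The main obstacle I anticipate is making the Ostrowski-like decomposition of periods fully precise for an arbitrary factor $w$ rather than a prefix of $c_\alpha$, and verifying that the prefix of $\pi_{i-1}^\omega$ of the indicated length is genuinely a conjugate (and not merely of the correct length) of the expected standard or semi-standard word; this requires tracking how the specific position of $w$ inside the ambient Sturmian word $s$ determines which conjugate of $s_\ell$ the word $E_i$ turns out to be.
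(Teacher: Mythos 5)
There is a genuine gap: your proposal is a plan rather than a proof at precisely the two points where the work lies. For the main claim, you reduce to showing that the prefix of $P_{i-1}^\omega$ of length $p_i-p_{i-1}$ is a conjugate of a standard or semi-standard word, and you propose to do this via an ``Ostrowski-like decomposition of periods'' and an induction on levels that you never carry out --- indeed you yourself flag the key verification (that the prefix of $\pi_{i-1}^\omega$ of the indicated length is genuinely a conjugate of the expected word) as an unresolved obstacle. The idea that makes all of this unnecessary is the period-difference fact of \cite{MRS98}: if $w$ has periods $q<p$, then the prefix of $w$ of length $|w|-q$ has period $p-q$. Taking $q=p_{i-1}$ and $p=p_i$, the prefix $u$ of $w$ of length $|w|-p_{i-1}$ has period $p_i-p_{i-1}$, and its fractional root for that period is exactly $E_i$ (by your own correct observation that $E_i$ is the length-$(p_i-p_{i-1})$ prefix of $w$). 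Moreover this is the \emph{minimal} period of $u$: a smaller period $q'$ of $u$ would combine with the period $p_{i-1}$ of $w$ to give $w$ the period $p_{i-1}+q'<p_i$, contradicting the choice of $p_i$ as the next period. Saari's theorem then applies directly to $u$ and yields the conclusion with no Ostrowski analysis. Note also a misstep in your setup: Saari's theorem concerns \emph{minimal fractional roots}, not primitive roots; your $\pi_{i-1}$ with $P_{i-1}=\pi_{i-1}^m$ need not be a conjugate of a standard or semi-standard word, because the minimal period of $P_{i-1}$ need not divide $|P_{i-1}|$ (e.g.\ $01010$ is primitive but has minimal period $2$).

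The monotonicity claim is also not proved: ``the increment must come from the lowest remaining level of the hierarchy, and once a level is exhausted it cannot return'' is an assertion, not an argument, and it leans on the same unestablished level structure. The paper settles it in two lines with the same period-difference fact: if $|E_{i+1}|>|E_i|$, then the suffix of $w$ of length $|w|-p_{i-1}$ has period $p_i-p_{i-1}=|E_i|$, hence so does the shorter suffix of length $|w|-p_i$; combined with the period $p_i$ of $w$ this gives $w$ the period $p_i+|E_i|$, which lies strictly between $p_i$ and $p_{i+1}=p_i+|E_{i+1}|$, a contradiction. I would encourage you to rebuild the proof around the \cite{MRS98} lemma; as written, neither half of the statement is established.
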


\begin{proof}
We need the following fact (see \cite{MRS98}):  If a word $w$ has periods
$p$ and $q$ with $q<p$, then the prefix of $w$ of length $|w|-q$ has
period $p-q$.  We also use the result of \cite[Theorem~4.4]{Saa08}: If
$w$ is a non-empty factor of $s$ then the minimal fractional root of $w$
is a conjugate of a standard or semi-standard word.

For $i \in \{2,\ldots,r\}$, consider the prefix of $w$ of length
$|w|-p_{i-1}$.  This prefix has period $p_i-p_{i-1}$ and corresponding
fractional root $E_i$.  We claim that this
is its minimal fractional root.  Suppose to the contrary that it has a smaller
fractional root of length $q$.  Then $w$ has period $p_{i-1}+q$, which is less than
$p_i$, which is a contradiction.  By \cite[Theorem~4.4]{Saa08}, we
have that $E_i$ is a conjugate of a standard word $s_k$ or a
semi-standard word $s_{k,t}$.

To show that the lengths of the $E_i$ are non-increasing, suppose to
the contrary that $|E_{i+1}|>|E_i|$.  Then the suffix of $w$ of length
$|w|-p_{i-1}$ has period $|E_i|$ and so does the suffix of $w$ of
length $|w|-p_i$.  As $w$ has period $p_i$, it therefore has period
$p_i + |E_i| < p_{i+1}$, which is a contradiction.
This completes the proof.
\end{proof}

\begin{corollary}
  Let $w$ be a factor of the Fibonacci word $c_\theta$ and let $\{p_1 < p_2 <
\cdots < p_r\}$ be the set of periods of $w$.  For $i \in
\{1,\ldots,r\}$ let $P_i$ be the prefix of $w$ of length $p_i$ and for
$i \geq 2$, write $P_i = P_{i-1}E_i$.  Then for $i \in
\{2,\ldots,r\}$ the word $E_i$ is a conjugate of a finite Fibonacci
word.
\end{corollary}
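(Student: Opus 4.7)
The plan is to derive the corollary as an almost immediate specialization of Proposition~\ref{prop:fib_period_differences}, by showing that for the Fibonacci slope the ``semi-standard'' alternative simply cannot occur and the ``standard'' alternative is exactly the finite Fibonacci words.

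First I would recall that $c_\theta$ is the characteristic Sturmian word with slope $\theta = 1/\phi^2 = [0,2,\overline{1}\,]$, so the partial quotients satisfy $d_n = 1$ for every $n \geq 2$. Applying Proposition~\ref{prop:fib_period_differences} to $w$, each $E_i$ is a conjugate of either a standard word $s_k$ or a semi-standard word $s_{k,t}$. The semi-standard words are defined only for indices $n \geq 2$ and integers $t$ with $1 \leq t < d_n$; since $d_n = 1$ for all $n \geq 2$, the range for $t$ is empty and no semi-standard word exists for this slope. Hence the second alternative is vacuous and $E_i$ must be a conjugate of some standard word $s_k$.

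Second, I would verify that these standard words coincide with the finite Fibonacci words $f_k$ defined in the preliminaries. The recurrence $s_k = s_{k-1}^{d_k} s_{k-2}$ specializes (for $k \geq 2$) to $s_k = s_{k-1} s_{k-2}$, with base cases $s_0 = 0$ and $s_1 = 0^{d_1-1} 1 = 01$; this is exactly the recursion generating $f_0 = 0$, $f_1 = 01$, $f_2 = 010$, and so on. Thus $s_k = f_k$ for every $k \geq 0$, which gives the conclusion.

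I do not anticipate a genuine obstacle: the corollary is essentially the observation that when all partial quotients past $d_1$ equal $1$, the two cases in Proposition~\ref{prop:fib_period_differences} collapse to a single one. The only thing to be slightly careful about is checking the index bound in the definition of $s_{n,t}$ (namely $n \geq 2$ and $1 \leq t < d_n$) so as to justify the claim that no semi-standard word exists for $\theta = [0,2,\overline{1}\,]$.
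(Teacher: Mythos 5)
Your proof is correct and matches the paper's (implicit) reasoning: the paper states this corollary without proof as an immediate consequence of Proposition~\ref{prop:fib_period_differences}, and your argument supplies exactly the intended justification --- for $\theta = [0,2,\overline{1}\,]$ the condition $1 \leq t < d_n = 1$ is vacuous so no semi-standard words exist, and the standard words for this slope are precisely the finite Fibonacci words.
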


\section{A computational approach}\label{sec:walnut}
We can also attempt to establish the critical exponents of each $x_k$
using the Walnut theorem-proving software \cite{Mou16} and the methods
of Du, Mousavi, Schaeffer, and Shallit \cite{DMSS16, MSS16}.  The method is based
on two things.  The first is the fact that the Fibonacci word
$c_\theta$ is a \emph{Fibonacci-automatic sequence}: that is, the
terms of $c_\theta$ can be computed by a finite automaton that takes a
number $n$ written in the Fibonacci numeration system as input and
outputs the $n$-th term of $c_\theta$.  The second important element
to the method of Mousavi et al.\ is that the addition relation
$\{(x,y,z) \in \mathbb{N}^3 : x+y=z\}$ can be recognized by a finite
automaton that reads its input in the Fibonacci numeration system.

To extend this method to an arbitrary characteristic word $c_\alpha$,
we first need to show that the terms of $c_\alpha$ can be computed by
a finite automaton that takes the Ostrowski $\alpha$-representation
of $n$ as input.  This is immediate from Theorem~\ref{thm:9.1.15}
above.  The second thing we need is the recognizability of the
addition relation for the Ostrowski $\alpha$-numeration system.
Hieronymi and Terry \cite{HT17} showed that addition is indeed
recognizable in the case when $\alpha$ is a quadratic irrational.
We now show that when $\alpha$ is a quadratic irrational, the Hubert
construction applied to $c_\alpha$ results in a word that is automatic
for the Ostrowki $\alpha$-numeration system.

\begin{theorem}
Let $\alpha$ be a quadratic irrational and let $c_\alpha$ be the
characteristic Sturmian word with slope $\alpha$.
Let $x$ be any word obtained by replacing the $0$'s in
$c_\alpha$ with a periodic sequence $y$ and replacing the
$1$'s with a periodic sequence $y'$.  Then $x$ is Ostrowski
$\alpha$-automatic.
\end{theorem}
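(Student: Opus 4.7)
The plan is to exhibit a deterministic finite automaton with output that, given the Ostrowski $\alpha$-representation of $n$, produces $x[n]$. Since $y$ and $y'$ are purely periodic with periods $p=|y|$ and $p'=|y'|$, the letter $x[n]$ is determined by three pieces of data: whether $c_\alpha[n]$ equals $0$ or $1$; the residue of $A(n) := |\{i \leq n : c_\alpha[i] = 0\}|$ modulo $p$ (used when $c_\alpha[n]=0$); and the residue of $B(n) := n-A(n)$ modulo $p'$ (used when $c_\alpha[n]=1$). The first of these is already Ostrowski $\alpha$-automatic by Theorem~\ref{thm:9.1.15}, so the heart of the argument is to show that the last two functions of $n$ are Ostrowski $\alpha$-automatic.

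For this, I would combine Theorem~\ref{thm:9.1.13} with Lemma~\ref{lem:9.1.9}. If $n$ has Ostrowski representation $b_j b_{j-1} \cdots b_0$, then $c_\alpha[1..n] = s_j^{b_j} s_{j-1}^{b_{j-1}} \cdots s_0^{b_0}$; counting $0$'s and $1$'s factor by factor yields
\[
A(n) = \sum_{i=0}^{j} b_i (q_i - p_i), \qquad B(n) = \sum_{i=0}^{j} b_i\, p_i.
\]
It therefore suffices to construct, for each fixed modulus $m$, a DFA that reads the Ostrowski digits of $n$ and computes a linear combination $\sum_i b_i c_i \bmod m$ with position-dependent coefficients $c_i$ (namely $c_i=(q_i-p_i)\bmod m$ or $c_i = p_i\bmod m$).

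The crucial input from the hypothesis that $\alpha$ is a quadratic irrational is twofold. First, by Theorem~\ref{thm:lin_rec}, both $(q_i)$ and $(p_i)$ satisfy a common constant-coefficient linear recurrence over $\Z$, and so does $(q_i-p_i)$; any such integer linear-recurrent sequence is eventually periodic modulo $m$, so each coefficient sequence $(c_i)$ is eventually periodic. Second, the continued fraction $[d_0, d_1, d_2, \ldots]$ of $\alpha$ is itself eventually periodic, so the digit bounds $d_{i+1}$ are uniformly bounded and the Ostrowski digit alphabet is finite. I would then build a DFA that scans the representation of $n$ from the least significant digit $b_0$ upward, maintaining as state the pair consisting of (i) the index $i$ tracked within the finite preperiod-plus-period structure of $(c_i)$, and (ii) the running partial sum modulo $m$, updating the latter by $b_i \cdot c_i$ at each step.

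The main obstacle is verifying that this state space is genuinely finite, and that is exactly where both uses of the quadratic irrationality of $\alpha$ pay off: without the linear recurrence, the coefficients $c_i$ would not stabilize modulo $m$, and without the eventual periodicity of $(d_i)$, the digit alphabet would itself be unbounded. Once the two counting automata for $A(n)\bmod p$ and $B(n)\bmod p'$ are in hand, a standard product construction combines them with the automaton of Theorem~\ref{thm:9.1.15} into a single DFA whose output, on reading the Ostrowski representation of $n$, is the appropriate letter of $y$ when $c_\alpha[n]=0$ and of $y'$ when $c_\alpha[n]=1$. Since this output equals $x[n]$, the word $x$ is Ostrowski $\alpha$-automatic.
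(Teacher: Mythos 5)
Your proof is correct and follows essentially the same route as the paper's: reduce $x[n]$ to knowing $c_\alpha[n]$ together with the counts of $0$'s mod $p$ and $1$'s mod $p'$ in the length-$n$ prefix, express those counts as $\sum_i b_i(q_i-p_i)$ and $\sum_i b_i p_i$ via Theorem~\ref{thm:9.1.13} and Lemma~\ref{lem:9.1.9}, and invoke Theorem~\ref{thm:lin_rec} to get eventual periodicity of the coefficients modulo the relevant period, from which the automaton is built. Your added remarks about the finite digit alphabet and the explicit state structure are sensible elaborations of what the paper leaves implicit, but they do not constitute a different argument.
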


\begin{proof}
  Let $p$ and $p'$ be the periods of $y$ and $y'$ respectively.  We
  need to show that there is a deterministic finite automaton with
  output that takes the Ostrowski $\alpha$-representation of $n$ as
  input and outputs $x[n]$.  To compute $x[n]$ it suffices to be able
  to compute $c_\alpha[n]$ as well as the number of $0$'s modulo $p$
  and $1$'s modulo $p'$ in the length-$n$ prefix of $c_\alpha$.
  Let $b_j b_{j-1} \cdots b_0$ be the Ostrowski
  $\alpha$-representation of $n$.
  By Theorem~\ref{thm:9.1.15}, there is an automaton that computes
  $c_\alpha[n]$ from $b_j b_{j-1} \cdots b_0$.  By
  Lemma~\ref{lem:9.1.9} and Theorem~\ref{thm:9.1.13} the number
  of $0$'s in the length $n$ prefix of $c_\alpha$ is 
  \[
  b_s(q_s - p_s) + b_{s-1}(q_{s-1} - p_{s-1}) + \cdots + b_0(q_0 - p_0).
  \]
  Since $\alpha$ is a quadratic irrational, its continued fraction
  expansion is ultimately periodic.  By Theorem~\ref{thm:lin_rec}, the
  sequences $(q_i)_{i\geq 0}$ and $(p_i)_{i\geq 0}$ both satisfy
  the same linear recurrence relation, and hence so does
  $(q_i - p_i)_{i\geq 0}$.  It follows that $( (q_i - p_i) \bmod{p}
  )_{i\geq 0}$ is ultimately periodic.  Based on this ultimately periodic
  sequence, it is easy to construct an automaton to compute 
  \[
  \left( b_s(q_s - p_s) + b_{s-1}(q_{s-1} - p_{s-1}) + \cdots +
    b_0(q_0 - p_0) \right) \bmod{p}
  \]
  given $b_s b_{s-1} \cdots b_0$ as input.  A similar argument applies
  for computing the number of $1$'s in the length-$n$ prefix of
  $c_\alpha$.  This completes the proof.
\end{proof}

As an example, Figure~\ref{fig:x4_aut} shows the Fibonacci-base
automaton that generates the word $x_4$ (the labels on the states
indicate the output symbol for that state).

\begin{figure}[h]
\centering
\includegraphics[scale=0.7]{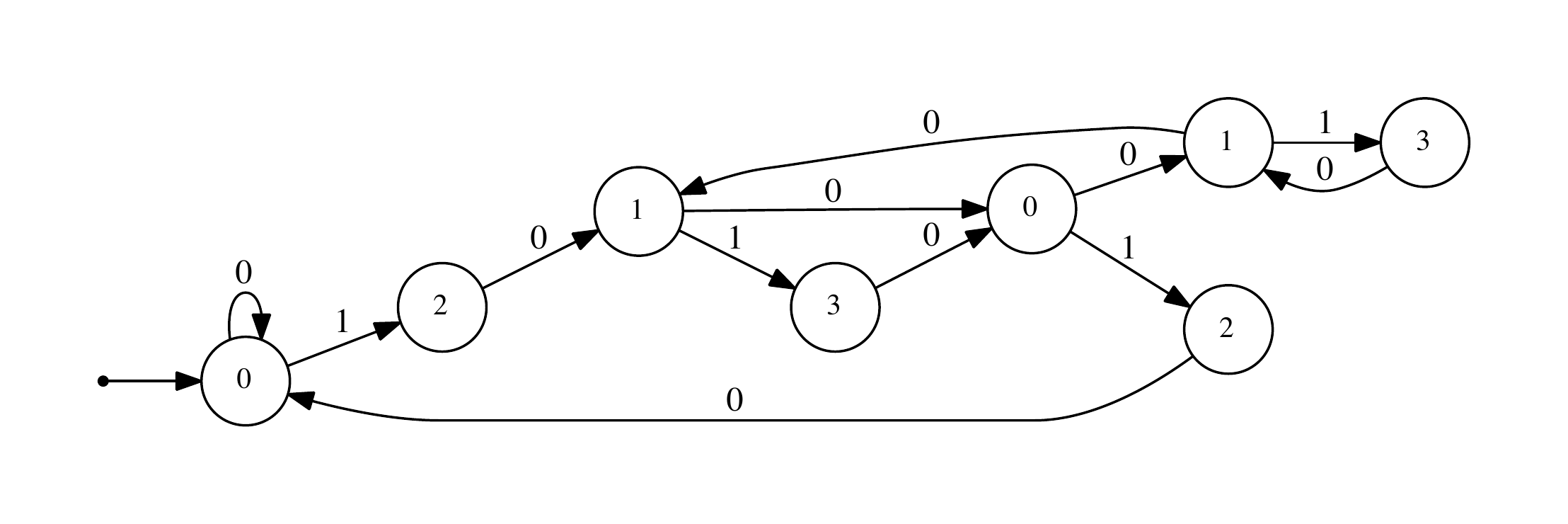}
\caption{Fibonacci-base automaton for $x_4$}
\label{fig:x4_aut}
\end{figure}

Let \texttt{X} denote the automaton in Figure~\ref{fig:x4_aut} in the
subsequent Walnut code.  First, we compute the periods $p$
such that a repetition with exponent $\geq 5/3$ and period $p$ occurs
in $x_4$:

\begin{verbatim}
eval periods_of_high_powers "?msd_fib Ei (p>=1) &
   (Aj (3*j <= 2*p) => X[i+j]=X[i+j+p])";
\end{verbatim}

The language accepted by the resulting automaton is $0^*1001000^*$;
i.e., representations of numbers of the form $F_n + F_{n-3} = 2F_{n-1}$.

\begin{verbatim}
reg pows msd_fib "0*1001000*";
\end{verbatim}

Next we compute pairs $(n,p)$ such that $x_4$ has a factor of length
$n+p$ with period $p$, and furthermore that factor cannot be extended
to a longer factor of length $n+p+1$ with the same period.

\begin{verbatim}
def maximal_reps "?msd_fib Ei (Aj (j<n) =>
   X[i+j]=X[i+j+p]) & (X[i+n]!=X[i+n+p])";
\end{verbatim}

We now compute pairs $(n,p)$ where $p$ has to be of the form $0^*1001000^*$
and $n+p$ is the longest length of any factor having that period.

\begin{verbatim}
eval highest_powers "?msd_fib (p >= 1) & $pows(p) & $maximal_reps(n,p) &
   (Am $maximal_reps(m,p) => m <= n)";
\end{verbatim}

The output of this last command is an automaton accepting pairs
$(n,p)$ having the form
\[
\colvec{0\\0}^*
\colvec{0\\1}\colvec{1\\0}\colvec{0\\0}\colvec{1\\1}\colvec{0\\0}
\left\lbrace \colvec{1\\0}\colvec{0\\0} \right\rbrace^* \colvec{0\\0}
\left\lbrace \epsilon, \colvec{1\\0} \right\rbrace.
\]
So when $p = 2F_{i-1}$ we see that $n=F_i-2$.
Thus, the maximal repetitions of ``large exponent'' in
$x_4$ have exponent of the form
$1+(F_i-2)/(2F_{i-1})$.  Using \eqref{eq:approx} and \eqref{eq:ratio}
we find that
\[
\frac{F_i-2}{2F_{i-1}} = \frac{F_i}{2F_{i-1}} - \frac{1}{F_{i-1}}
< \frac12 \left(\phi + \frac{1}{F_{i-1}^2}\right) - \frac{1}{F_{i-1}}
< \frac{\phi}{2}.
\]
Hence these exponents converge to $1+\phi/2$ from below, and we
conclude that $x_4$ has critical exponent $1+\phi/2$.

In principle, if the addition automaton of \cite{HT17} were
implemented in Walnut, it would be possible to carry out a similar
proof for $x_5$, $x_6$, etc., provided the resulting computation was
feasible.

\section{Future work}
The obvious open problem is to establish the claimed critical
exponents for $x_k$ for $5 \leq k \leq 10$, and more generally, show that for
$k \geq 5$, the least critical exponent for an infinite balanced word
over a $k$-letter alphabet is $(k-2)/(k-3)$.

\end{document}